\documentclass{article}
\usepackage[T1]{fontenc}
\usepackage[utf8]{inputenc}
\usepackage[icelandic, english]{babel}
\usepackage{amsmath}
\usepackage{amssymb}
\usepackage{amsthm}
\usepackage{parskip}
\usepackage{hyperref}
\usepackage{pgfplots}
\parskip 0pt

\title{The difficulty of beating the Taxman}
\author{
Atli Fannar Franklín, \textit{The University of Iceland} \\
Robert K. Moniot, \textit{Fordham University}
}
\date{\today}

\pgfplotsset{compat=1.17}

\newcommand\floor[1]{\left\lfloor#1\right\rfloor}
\newcommand\p[1]{\left(#1\right)}
\newcommand\cp[1]{\left\{#1\right\}}

\newtheorem{theorem}{Theorem}

\newtheorem{definition}{Definition}

\begin{document}

\maketitle

\section*{Introduction}

Taxman was invented around the year 1970 by Diane Resek of San Francisco State University while she worked at the Lawrence Hall of Science in Berkeley \cite{diane}. It was made as a teaching tool, providing a more engaging method of practising arithmetic. The game soon became popular with teachers of computer science as a programming exercise, since it is fairly easy but not trivial to implement, and provides a gentle introduction to important algorithm design principles \cite{example}. The game is sometimes referred to as Number Shark or Zahlenhai. \\

The Taxman game is an adversarial game played against the titular Taxman. The Taxman's moves are fully deterministic, so it is a one person game. The game starts with all the positive integers from $1$ to some maximum $N$ in play. The player's moves consists of choosing a number in play and adding it to their score, removing it from play after. The Taxman then takes all its divisors and adds them to their own score, the tax. The player is not allowed to take a number that results in no tax. Then at the end the Taxman gets any remaining numbers. The victor is the one with the greater score. \\

The game has been studied to find optimal sequences of picks \cite{chess, hoey}.
The optimal scores as a function of pot size form a sequence that is listed
on the Online Encyclopedia of Integer Sequences (OEIS) \cite{oeis}, sequence
A019312. Since finding optimal play appears likely to be
NP-hard, efforts have been made to find heuristic strategies that do well
in practice \cite{mainref, opt2}. The existence of winning strategies has been proven, albeit up to now only for values of $N$ larger than some undetermined and
quite large value \cite{opt1,perl}. These efforts approached the problem from a number theory perspective.  The present work introduces a graph theoretic view that leads to a more tractable formulation. \\

In this paper we present an equivalence between valid sequences of moves in the Taxman game and certain graph theoretic constraints. This is then used to show that a generalized version of Taxman is NP-hard. After this we present a heuristic method that provides a winning move sequence for the original Taxman game for all $N > 3$ in $\mathcal{O}(N\log(N))$ time. Lastly we present two algorithms that produce good lower and upper bounds on the optimal score, both running in $\mathcal{O}(N^2\log(N))$. 

\section*{Generalizing the Taxman}

Before defining a more general notion of taxman we will need to define partially ordered sets. \\

\begin{definition}
A strict partial order is a set $P$ along with a relation $<$ satisfying the following three properties. \\
\begin{itemize}
\item No $p \in P$ satisfies $p < p$ (irreflexivity).
\item If $a, b, c \in P$, $a < b$ and $b < c$ then $a < c$ (transitivity).
\item If $a, b \in P$ and $a < b$ then $b < a$ does not hold (asymmetry). \\
\end{itemize}
For such a poset we will let $q \leq p$ denote the fact that $q < p$ or $q = p$. \\
\end{definition}

To simplify notation going forward we will also define some additional notation before moving on. \\

\begin{definition}
Let $(P, <)$ be a strict partial order. For $p, q \in P$ we say that $p$ covers $q$, denoted $q \lessdot p$, if $q < p$ and there exists no $x \in P$ such that $q < x < p$. \\
\end{definition}

With this in mind we can give the following definition of the generalized taxman game. \\

\begin{definition}
Let us have some finite strict partial order $(P, <)$ and a weight function $w : P \rightarrow \mathbb{R}$. We define the generalized taxman game on $(P, <, w)$ as follows. We start with all the elements of $P$ in play. In each move we may pick an item $p \in P$ if some $q \in P$ such that $q < p$ is still left. Then we gain $w(p)$ points, remove $p$ and the taxman removes all $a \in P$ such that $a < p$. Once we run out of valid picks the taxman claims the rest. \\
\end{definition}

We see that by picking $P = \{1, 2, \dots, n\}$, $<$ as the strict divisibility relation and $w$ as the identity function we recover the original game. However, while we can define the taxman game on a general poset, our equivalence will consider a specific kind of poset. Let us thus give one last definition before moving on. \\

\begin{definition}
A graded poset is a poset $(P, <)$ equipped with a rank function $\rho : P \rightarrow \mathbb{N}$ satisfying the two following conditions: \\
\begin{itemize}
\item If $p, q \in P$ and $q < p$ then $\rho(q) < \rho(p)$.
\item If $p, q \in P$ and $q \lessdot p$ then $\rho(q) + 1 = \rho(p)$. \\
\end{itemize}
\end{definition}

We note that the original taxman game is played on a ranked poset. In that case we can simply rank the numbers $1, 2, \dots, n$ by their number of prime factors, counted with multiplicity.

\section*{An equivalent problem}

Before stating the theorem, we give one final definition. \\

\begin{definition}
Let us have some graph $G$ where the vertex set of $G$ is a finite graded poset $(P, <, \rho)$. A matching on $G$ is a subset of the edges of $G$ such that no two edges share any endpoints. A cycle is called alternating if exactly every other edge in the cycle is in the matching. Lastly we will call such a cycle flat if the vertices in the cycle all have rank $n$ or $n + 1$ for some number $n$. \\
\end{definition}

The fact that allows us to relate this form of taxman to a NP-complete problem is the following result. \\

\begin{theorem}
Consider the generalized taxman game on $(P, <, \rho, w)$ where $P$ is a finite graded poset. Construct a graph $G$ with vertex set $P$ and an edge between $x$ and $y$ iff $x \lessdot y$ or $y \lessdot x$. For an edge where $x \lessdot y$ we put the weight $w(y)$ on the edge. We note that this is well defined since $x \lessdot y$ and $y \lessdot x$ can not hold simultaneously. Then the optimal sequence of plays in the generalized taxman game corresponds to a maximum weight matching on $G$ that does not contain any flat alternating cycles.
\end{theorem}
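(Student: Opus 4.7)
My plan is to establish the correspondence by constructions in both directions and then deduce that the two maxima coincide.

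First I would prove a lemma: whenever a pick $p$ is valid at step $i$ (so some $q<p$ is still in play), at least one cover $q'\lessdot p$ must also be in play at step $i$. One takes $q'$ to be the in-play ancestor of $p$ of largest rank; were $q'$ not a cover, some intermediate element would also have had to remain in play (else it would have been removed at an earlier step, dragging $q'$ down with it), contradicting maximality. Using this lemma, I would build a matching $M$ from a given valid play by pairing each pick $p_i$ with such a cover $q_i\lessdot p_i$ that is in play at step $i$ (and consequently taxed there). Verifying that $M$ is indeed a matching is a rank-and-order check: neither $p_i$ nor $q_i$ can appear as the endpoint of a second edge of $M$ without contradicting the play order. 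Moreover, the weight of $M$ equals the player's score $\sum_i w(p_i)$.

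Next I would show this induced $M$ contains no flat alternating cycle. Suppose $v_1 v_2 \cdots v_{2k}$ were such a cycle, with matching edges $\{v_{2i-1},v_{2i}\}$ and odd-indexed vertices at rank $n+1$. The non-matching cover edges give $v_{2i}<v_{2i+1}$; since $v_{2i}$ must be in play when $v_{2i-1}$ is picked (by our construction), $v_{2i+1}$ must be picked strictly after $v_{2i-1}$, else $v_{2i}$ would be taxed too early. Chaining these order constraints around the cycle yields a contradiction.

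Conversely, given a matching $M$ with no flat alternating cycle, I would construct a valid play by topologically sorting the dependency digraph whose vertices are the tops of $M$ and whose arcs are $p\to p'$ whenever $q_p<p'$, where $q_p$ denotes the cover of $p$ matched to it in $M$. Any cycle $p_{i_1}\to\cdots\to p_{i_m}\to p_{i_1}$ would force $\rho(p_{i_1})\le\rho(p_{i_2})\le\cdots\le\rho(p_{i_1})$, since $\rho(q_{p_{i_j}})=\rho(p_{i_j})-1<\rho(p_{i_{j+1}})$. So all tops in the cycle share one common rank $n+1$, all matched covers sit at rank $n$, and each relation $q_{p_{i_j}}<p_{i_{j+1}}$ is in fact a cover; the vertices then assemble into a flat alternating cycle of $M$, contradicting the hypothesis. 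Hence the digraph is acyclic, and any topological ordering of its vertices yields a valid play whose score equals the weight of $M$. Combining the two directions gives the stated correspondence. I expect the rank-parity step---showing that a general dependency cycle must collapse to vertices on only two adjacent rank levels and thereby become a genuine flat alternating cycle in $G$---to be the main technical hurdle, while the remaining arguments reduce to careful bookkeeping about play order and the matching condition.
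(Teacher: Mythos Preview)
Your proposal is correct and follows essentially the same route as the paper: both directions match the paper's structure, your lemma that an in-play cover always exists is precisely the paper's argument that the maximal taxed element $q_i$ satisfies $q_i\lessdot p_i$, and your rank-squeeze argument collapsing a dependency cycle onto two adjacent ranks is identical to the paper's. The only cosmetic differences are that you rule out flat alternating cycles in the forward direction by chaining play-order constraints (the paper uses a short counting argument instead), and in the converse you package the construction as a topological sort of an explicit dependency digraph, whereas the paper iteratively extracts a first valid pick via the same walk-backwards-until-source-or-cycle idea; just be sure to exclude the trivial arcs $p\to p$ (which always satisfy $q_p<p$) so that the digraph is genuinely acyclic.
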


\begin{proof}
We will prove this by demonstrating a bijection between flat-alternating-cycle-free matchings on $G$ and move sequences in the taxman game such that the weight of the matching is equal to the score for the move sequence. Thus if this holds, maximizing one means maximizing the other. \\

We start with the direction of showing that a move sequence for the taxman game will give us an flat-alternating-cycle-free matching on $G$. Suppose we have some optimal sequence of plays $p_1, \dots, p_n$ where $p_i$ denotes the value removed in the $i$-th move. Then by the definition of the taxman game some smaller value or values are removed in each of those moves. Let $q_i$ then be the largest value the taxman gets in the $i$-th move, then $q_i < p_i$. Suppose then $q_i \lessdot p_i$ does not hold for some $i$. Then there must exist an $x$ such that $q_i < x < p_i$. But that means $x$ has been removed at some point. If we chose $x$ at some point $q_i$ wouldn't be an option as well. But if we removed some element $y$ such that $x < y$ then we must have $q_i < x$, so it would have been removed in that case as well. Thus we get a contradiction so $q_i \lessdot p_i$. Thus all of our pairs $(q_i, p_i)$ correspond to edges in our graph $G$. Furthermore our score for choosing the $p_i$ correspond exactly to the weights of the edges. Lastly we must show this is an alternating cycle free matching. \\

We start by showing it is a matching, which means the $p_i$ and $q_i$ are all pairwise distinct. The $p_i$ are internally pairwise distinct as per their definition. We can't have $q_i = q_j$ for $i \neq j$ either since $q_i$ is removed in the $i$-th move but $q_j$ is removed in the $j$-th move and $i \neq j$. For the same reason we can't have $p_i = q_j$ either for $i \neq j$. Thus this is a matching, so let us show it is flat-alternating-cycle-free next. Suppose we have some flat alternating cycle $x_1, x_2, \dots, x_{2n}$. Let $n$ be such that all the $x_i$ are of rank $n$ or $n + 1$. We can shift the indices of the cycles as we like, so WLOG $x_1$ is of rank $n$. Similarly we can reverse the cycle as we like so WLOG there is an edge from $x_1$ to $x_2$. Since the poset is graded, we can't have $x_i$ and $x_{i + 1}$ of the same rank. Thus $x_2$ is of rank $n + 1$, $x_3$ is of rank $n$ and so on. Then we have $x_{2i - 1} \lessdot x_{2i}$ for $i = 1, \dots, n$. Thus the odd indexed values have a lower rank than the even indexed ones. But since this is a cycle in the original graph there must be an edge between $x_{2i}$ and $x_{2i+1}$ as well. Thus the rank tells us that we must have $x_{2i+1} \lessdot x_{2i}$. There are $n$ edges in this cycle that are a part of our matching, so our moveset allows us to obtain $n$ of these values. But $x_{2i-1} \lessdot x_{2i}$ and $x_{2i+1} \lessdot x_{2i}$ so as soon as we make one of the moves, there will be less values left of the lower rank. Thus this could not have been a valid move sequence, giving us our desired contradiction. This completes the first direction of our proof. \\

Now we show the reverse direction. Let us have some flat-alternating-cycle-free matching on $G$. Let us denote the pairs of vertices in our matching with $(x_1, y_1), \dots, (x_n, y_n)$ such that $x_i \lessdot y_i$. We will now show that we can always pick some $y_i$ without invalidating any of the other $y_j$ as legal moves. This means finding a $y_i$ such neither $x_j < y_i$ nor $y_j < y_i$ holds for any $j \neq i$. We note that if $y_i > y_j$ then since $y_j > x_j$ we get $y_i > x_j$ by transitivity. Thus it suffices to show that $y_i > x_j$ does not hold. \\

Thus we now consider a procedure where we start by picking a pair $(x_i, y_i)$ arbitrarily. If it satisfies our desired condition, then we are done. If not, there is some $(x_j, y_j)$ such that $x_j < y_i$. In this case we pick $(x_j, y_j)$ instead. Since our poset is finite we can repeat this procedure until one of two things happens. In the first case we find a pair satisfying our condition, in which case we are done. Otherwise we must at some point encounter a pair we've encountered before. Let us prove the second case can not occur by contradiction. Assume then we have a sequence of pairs $(x_{i_1}, y_{i_1}), \dots, (x_{i_m}, y_{i_m})$ such that $x_{i_1} < y_{i_2}$, $x_{i_2} < y_{i_3}$ and so on in addition to $x_{i_m} < y_{i_1}$. Then $\rho(y_{i_j}) = \rho(x_{i_j}) + 1$ and $\rho(x_{i_j}) < \rho(y_{i_{j+1}})$. Combining these we get $\rho(y_{i_j}) \leq \rho(y_{i_{j+1}})$. But this holds cyclically, so going around the entire cycle the values are squeezed together. Thus all the $y_{i_j}$ have the same rank and all the $x_{i_j}$ have the same rank, one lower than that of the $y_{i_j}$. Thus no value can fit in between $x_{i_j}$ and $y_{i_{j+1}}$ so we get $x_{i_j} \lessdot y_{i_{j+1}}$. But now this is a cycle in the original graph with exactly every other edge in the matching. And furthermore the vertices are contained in two adjacent ranks, so it is flat. This contradicts the fact that our matching is flat-alternating-cycle-free, so this can not occur.
\end{proof}

\begin{theorem}
Solving the generalized taxman game optimally is NP-hard. More specifically it is NP-hard for the case when the weight function is the constant function $1$.
\end{theorem}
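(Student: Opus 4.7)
The plan is to invoke Theorem 1 to reformulate the optimization version of the taxman game as a constrained maximum matching problem, and then to reduce from a known NP-hard matching variant. The natural target is the Maximum Uniquely Restricted Matching problem in bipartite graphs, shown to be NP-hard by Golumbic, Hirst, and Lewenstein. Recall that a matching $M$ is \emph{uniquely restricted} if there is no alternating cycle with respect to $M$, equivalently, if $M$ is the unique matching on its set of saturated vertices.

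Given an instance consisting of a bipartite graph $B = (U \cup V, E)$ and a target $k$, I would construct a graded poset $(P, <, \rho)$ by taking $P = U \cup V$, assigning $\rho(u) = 0$ for $u \in U$ and $\rho(v) = 1$ for $v \in V$, and declaring $u < v$ precisely when $u \in U$, $v \in V$ and $(u,v) \in E$. Irreflexivity, asymmetry, and transitivity are satisfied vacuously because no two elements on the same side are ever comparable, so this really is a strict partial order, and $\rho$ makes it graded of height one. Every relation $u < v$ is already a cover relation, so the covering graph $G$ produced by Theorem 1 is exactly $B$, and since all vertices have rank $0$ or $1$, every alternating cycle in $G$ is automatically flat.

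Setting the weight function to the constant $w \equiv 1$, Theorem 1 then identifies the optimum score of the taxman game on $(P, <, \rho, w)$ with the size of a maximum matching in $B$ containing no alternating cycle, which is exactly the size of a maximum uniquely restricted matching in $B$. The construction is linear in the size of $B$, so a polynomial-time optimal taxman algorithm would solve Maximum Uniquely Restricted Matching in polynomial time, establishing NP-hardness even under the constant weight restriction. The main conceptual obstacle is recognizing that the forbidden-matching condition handed to us by Theorem 1 coincides, in the height-one case, with a combinatorial constraint that has already been studied and shown to be hard; once that identification is spotted, no gadgetry is required, and the reduction is essentially a relabeling. A minor item to verify carefully is simply that the two-rank reduction produces a legal graded poset with the covering relation inherited intact, which I have outlined above.
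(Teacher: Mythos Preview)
Your argument is correct and is essentially identical to the paper's own proof: both build a two-rank graded poset directly from an arbitrary bipartite graph, observe that the covering graph recovers that bipartite graph and that every alternating cycle is automatically flat, set $w\equiv 1$, and then invoke Theorem~1 to equate the optimal taxman score with the maximum size of an alternating-cycle-free matching. The only cosmetic difference is the source cited for NP-hardness of the target problem---the paper appeals to M\"uller's result on alternating-cycle-free matchings, while you cite the equivalent formulation as maximum uniquely restricted matching due to Golumbic, Hirst, and Lewenstein.
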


\begin{proof}
Let us show that the poset for the generalized taxman game can be chosen such that the graph $G$ becomes any bipartite graph. Let us then have some bipartite graph with halves $A, B$. Let us consider the edges of this graph to be oriented from $A$ to $B$. We let our poset be $P = A \cup B$, defining $q < p$ if there is an edge from $p$ to $q$. We let the rank function take the value $0$ on $B$ and value $1$ on $A$. Going through the definitions above we quickly see that this will be a valid ranked finite poset. Clearly this will also produce exactly our desired graph in the theorem above. Furthermore any alternating cycle in this graph will be contained in two ranks. We also restrict ourselves to the unit weight function, so the maximum weight matching is simply the maximum cardinality matching. Thus we see that a polynomial time solution to the generalized taxman game would give a polynomial time solution to finding the maximum cardinality alternating cycle free matching in an arbitrary bipartite graph. By \cite{npresult} this is an NP-complete problem.
\end{proof}

\section*{A winning strategy for the original Taxman game}

We now present an efficient algorithm for solving the Taxman problem that is
non-optimal but capable of winning the game for all $N > 3$. We start by constructing some sets of pairs from the values $1$ to $N$. Let us define

\[S_p = \cp{(x, px) | \, x \in \mathbb{N},  px \leq N }\]

Our algorithm runs through every prime $p \leq N$ in descending order. For each such prime $p$ it runs through the pairs in $S_p$ in descending order and picks every pair that does not share any endpoints with earlier picks. These chosen pairs will then form the matching corresponding to the solution. Thus we need to prove that this forms a matching and does not contain any flat alternating cycles. \\

\begin{theorem}
    Our given algorithm produces a flat alternating cycle-free matching.
\end{theorem}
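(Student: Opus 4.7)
The matching property is immediate from the greedy construction: a pair is added only when neither of its endpoints belongs to a previously selected pair, so the chosen edges are pairwise disjoint. The substantive task is to rule out flat alternating cycles, which I would approach by contradiction.

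Suppose $C = x_1, \dots, x_{2k}$ is such a cycle in $M$, with $\rho(x_{2i-1}) = n$, $\rho(x_{2i}) = n+1$, matching edges $(x_{2i-1}, x_{2i})$ of prime ratio $p_i = x_{2i}/x_{2i-1}$, and non-matching edges $(x_{2i}, x_{2i+1})$ of prime ratio $q_i = x_{2i}/x_{2i+1}$ (indices modulo $2k$). The recursion $x_{2i+1} = x_{2i-1} \cdot p_i/q_i$ wraps around the cycle to give $\prod_i p_i = \prod_i q_i$, and unique factorization then forces the multisets of matching and non-matching primes to coincide. Let $P$ be the maximum prime appearing; it shows up on both sides, so I may fix some $j$ with $q_j = P$. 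Then $e = (x_{2j}/P, x_{2j}) = (x_{2j+1}, x_{2j})$ lies in $S_P$ yet is absent from $M$, so when the algorithm considered $e$ some previously selected pair must already have claimed one of its endpoints.

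Because $M$ is a matching, the unique edge of $M$ containing $x_{2j}$ is $(x_{2j-1}, x_{2j})$ and the unique edge of $M$ containing $x_{2j+1}$ is $(x_{2j+1}, x_{2j+2})$. The blocker is therefore one of these two, lying respectively in $S_{p_j}$ or $S_{p_{j+1}}$. For a pair in $S_{p'}$ to have been chosen before $e \in S_P$, either $p' > P$, directly contradicting the maximality of $P$ among cycle primes, or $p' = P$. But $p_j = P$ would force $x_{2j-1} = x_{2j}/P = x_{2j+1}$, and $p_{j+1} = P$ would force $x_{2j+2} = P \cdot x_{2j+1} = x_{2j}$; both identifications contradict the distinctness of the vertices of $C$. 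With every possibility exhausted, no flat alternating cycle can occur.

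The main obstacle is purely bookkeeping: I have to be precise about what \emph{earlier} means across the different $S_{p'}$-passes (primes are processed in descending order) and within the same $S_P$-pass, and to observe that the partner identifications above use only the matching property of $M$ together with the fact that $C$ is an alternating cycle inside $M$. Once these details are pinned down the contradiction distills into the short case analysis above.
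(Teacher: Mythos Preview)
Your argument is correct and follows essentially the same route as the paper's: pick the largest prime $P$ occurring as a ratio along a hypothetical flat alternating cycle, locate a non-matching edge of ratio $P$, and derive a contradiction from the fact that the algorithm, processing primes in descending order, would have selected that edge before either of its neighbouring matching edges. You phrase the contradiction as ``the blocker must lie in some $S_{p'}$ with $p' \geq P$, hence $p' = P$, hence a vertex identification,'' whereas the paper phrases the contrapositive: the neighbouring matching edges use strictly smaller primes, so both endpoints were still free when $S_P$ was scanned and the edge would have been taken --- but these are two sides of the same argument.
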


\begin{proof}
	
    To show that this is a matching, we only have to show that no two pairs in $S_p$ have any end points in common. But this is clear from the definition of the algorithm. Thus we only have to show that it contains no flat alternating cycles. Suppose we have some flat alternating cycle $x_1, y_1, x_2, y_2, \dots, x_r, y_r$. Without loss of generality we can choose the naming such that $(x_i, y_i)$ are the pairs in our matching and $(y_i, x_{i+1})$ are the ones not in the matching, indices considered modulo $r$. Furthermore we can choose the names such that $y_i > x_i$. As we walk through these numbers in order we only change one prime factor in each step. Let $p$ be the largest such prime factor that occurs in the cycle. Suppose then $i$ is such that $y_i = px_i$. Then we need to remove that factor of $p$ at some point to end up where we started. Thus for some $j$ we have $y_j = p x_{j + 1}$, so since $x_j \neq x_{j+1}$ we have $y_j \neq p x_j$ but $(x_j, y_j)$ is in the matching. Since $p$ was the largest such prime factor in the cycle, we must have some prime $q < p$ such that $y_j = q x_j$. Similarly there is some prime $r < p$ such that $y_{j+1} = r x_{j+1}$. Thus at the point in the algorithm when we considered $S_q$ both $x_j$ and $y_j$ were free to be taken. Similarly we have that $x_{j+1}, y_{j+1}$ were free when we considered $S_r$. But this means they would also have been free to be taken when we considered $S_p$ since $p > q, r$. But if this were the case $(x_{j+1}, y_j)$ would have been in the matching which gives us a contradiction. Thus there are no flat alternating cycles.
    
\end{proof}

Thus this algorithm yields a set of numbers that can be put into an order
corresponding to a valid Taxman game. We address the problem of finding
the ordering of the picks in Theorem 6 below. Since this matching is 
automatically free of flat alternating cycles, we will call it the ``born-free''
matching, and the resulting algorithm for playing Taxman we will call the born-free
matching algorithm. We have not yet proved that it is a winning strategy, i.e.,
that it gets more than half of the sum of the integers in the pot for large enough
$N$. It will be easier to prove this for a modified algorithm that is 
the same as above except that for all $N$, the
only primes used are less than or equal to $5$. Clearly the original algorithm
does at least as well as this modified algorithm. 
We will call this modified algorithm the ``born-free matching with $p_{max} =
5$'' algorithm, to distinguish it from the original one. \\

\begin{theorem}
    For $N \geq 847$ the born-free matching with $p_{max}=5$ algorithm will take more than half the pot.
\end{theorem}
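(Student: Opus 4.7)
The plan is to reduce the win condition to a clean comparison of sums, and then to verify that comparison by analyzing the matching one class at a time. Let $M_p \subseteq S_p$ denote the pairs picked in the $p$-pass of the algorithm ($p \in \{5,3,2\}$), and set $A_p = \sum_{(x,px) \in M_p} x$ together with $U$ the sum of the unmatched elements. Each pair in $M_p$ contributes $px$ to the player and $x$ to the taxman, while each unmatched element contributes only to the taxman, so the total pot $\tfrac{1}{2}N(N+1)$ decomposes as $3A_2 + 4A_3 + 6A_5 + U$ and the player's score is $2A_2 + 3A_3 + 5A_5$. The win condition rearranges into the clean inequality
\[
A_2 + 2A_3 + 4A_5 \;>\; U.
\]

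To estimate both sides, I would partition $\{1,\ldots,N\}$ into classes indexed by the $30$-coprime part: every $n$ factors uniquely as $n = 2^a 3^b 5^c m$ with $\gcd(m,30)=1$, and pairs from $S_2, S_3, S_5$ never cross classes. Within each class the elements form a three-dimensional corner lattice, and the three passes tile this lattice with dominoes along the $5$-axis, then (on what remains) along the $3$-axis, then along the $2$-axis. The essential combinatorial fact, already implicit in the proof of Theorem 3, is that the greedy-from-the-top rule inside a chain of length $\ell$ always leaves at most one survivor (the bottom element), and leaves one iff $\ell$ is even. Iterating this through the three passes gives explicit class-by-class formulas for $A_2, A_3, A_5$, and $U$ in terms of $\lfloor \log_p(N/m) \rfloor$ for $p \in \{2,3,5\}$.

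The classes then split into two regimes. The \emph{singleton} classes ($m > N/2$ with $\gcd(m,30)=1$) consist of $m$ alone and hence contribute exactly $m$ to $U$; by the density $4/15$ of $30$-coprime integers, their total is asymptotic to $\tfrac{1}{10}N^2$ and forms the dominant part of $U$. The \emph{fat} classes ($m \leq N/2$) produce the bulk of $A_5, A_3, A_2$ through many long chains; here the coefficient $4$ on $A_5$ together with the geometric weights $5^a$ inside $5$-chains should let $4A_5$ alone outpace the singleton contribution in the limit, with $2A_3 + A_2$ providing an additional margin. The resulting asymptotic inequality then follows once the leading constants are computed.

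The main obstacle, I anticipate, is pinning down the explicit threshold $N \geq 847$ rather than an abstract ``sufficiently large $N$''. The asymptotic comparison falls out easily once the leading constants are in hand, but the particular value $847$ depends sensitively on lower-order terms coming from floor functions, from boundary classes near $m \approx N/2$, and from parity-of-chain-length corrections that can wiggle by small amounts per class. I would therefore express $D(N) := A_2 + 2A_3 + 4A_5 - U$ as an elementary (if tedious) sum of floor functions, combine a clean asymptotic lower bound with either a monotonicity argument on $D(N)$ or a direct finite check for $N$ in a modest range just above $847$, and verify that the two pieces meet cleanly at the stated threshold.
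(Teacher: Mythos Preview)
Your rearrangement to $A_2+2A_3+4A_5>U$ and the decomposition by $30$-coprime part are sound, but the route is far more elaborate than the paper's and the proposal carries real gaps. The paper does not analyse the full matching at all: it simply exhibits three blocks of pairs that are \emph{guaranteed} to be selected---$(x,5x)$ for $x\in(N/25,N/5]$, then $(x,3x)$ for $x\in(N/5,N/3]$ with $5\nmid x$, then $(x,2x)$ for $x\in(N/3,N/2]$ with $\gcd(x,15)=1$---sums the player's take from these blocks using only $\lfloor N/d\rfloor\ge(N-d+1)/d$, and obtains an explicit quadratic whose ratio to $N(N+1)$ is increasing and crosses $1/2$ at $N\approx846.4$. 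No classes, no chain parities, no survivor bookkeeping; the threshold $847$ falls out of a single polynomial inequality.

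Against this, your plan has concrete errors. The chain parity is stated backwards: a chain of length $\ell$ leaves its bottom element unmatched iff $\ell$ is \emph{odd}. More seriously, the passes do not iterate as cleanly as you assert. After the $5$-pass the survivors along the $3$-axis (for fixed $m,a$) are those $b$ with $\lfloor\log_5(N/(m2^a3^b))\rfloor$ even, which is a union of several intervals in $b$, not a single chain; hence the $3$-pass is not ``greedy on a chain'' and closed formulas in $\lfloor\log_p(N/m)\rfloor$ alone are not available. Your heuristic that $4A_5$ by itself outpaces the singleton part of $U$ is also false: $4A_5\le 4\sum_{x\le N/5}x\sim 2N^2/25$, whereas the singletons already sum to $\sim N^2/10$, so $A_2$ and $A_3$ are indispensable, not merely a margin. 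Finally, you defer the value $847$ to an unspecified monotonicity or finite check; the paper's interval bound produces it directly. Your framework could in principle give sharper asymptotics if all of this were repaired, but as a proof of the stated theorem it is incomplete where the paper's argument is both simpler and finished.
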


\begin{proof}
We only need to bound it from below, so we can omit terms as desired. We start by bounding the sum obtained by the pairs in $S_5$. It will match every value in $]N/25, N/5]$ to its multiple of $5$. We can then use the bound $N/d \geq \floor{N/d} \geq (N - d + 1) / d$ to get
    \begin{small}
    \[\sum_{i = \floor{N/25} + 1}^{\floor{N/5}} 5i = 5\frac{\floor{\frac{N}{5}}\p{\floor{\frac{N}{5}} + 1}}{2} - 5\frac{\floor{\frac{N}{25}}\p{\floor{\frac{N}{25}} + 1}}{2} \geq \frac{12}{125}N^2 - \frac{2}{5}N - \frac{2}{5}\]
    \end{small}
Next we consider pairs from $S_3$. It will match every value in $]N/5,N/3]$ to its multiple of $3$ as long as it's not a multiple of $5$. The sum of all multiples of $3$ in the interval is given by
    \[\sum_{i = \floor{N/5}+1}^{\floor{N/3}} 3i\]
We need to subtract the multiples of five from this. If $i = 5j$ for some $j$ then $N/5 < i \leq N/3$ translates to $N/25 < j \leq N/15$. Thus the subtracted sum becomes
    \[\sum_{j = \floor{N/25}+1}^{\floor{N/15}} 15j\]
Using the bound $N/d \geq \floor{N/d} \geq (N - d + 1)/d$ again we can get that the difference between these two sums is at least
    \begin{align*}
        &\frac{3}{2} \frac{N - 3 + 1}{3} \p{\frac{N - 3 + 1}{3} + 1} - \frac{3}{2} \frac{N}{5} \p{\frac{N}{5} + 1} \\
        &\quad - \frac{15}{2} \frac{N}{15} \p{\frac{N}{15} + 1} + \frac{15}{2} \frac{N - 25 + 1}{25} \p{\frac{N - 25 + 1}{25} + 1}\\
        &= \frac{32}{375} N^2 - \frac{466}{375}N - \frac{233}{375}
    \end{align*}
    Lastly we consider $S_2$. Here every value in $]N/3,N/2]$ is matched to its multiple of $2$ so long as it's neither a multiple of $3$ nor $5$. Using inclusion-exclusion along with the same bounds as before we can get the lower bound:
    \[\frac{2}{27} N^2 - \frac{362}{135}N - \frac{181}{135}\]
    In total we have shown that the matching achieves a ratio of
    \[\frac{\frac{1724}{3375}N^2 - \frac{29188}{3375}N - \frac{15944}{3375}}{N^2+N}\]
    If we calculate out the derivative we can get that it is $\geq 0$ for positive $N$. Furthermore if we solve for when this ratio is equal to $1/2$ we get $N \approx 846.4$, so the ratio will be greater than $1/2$ for all $N \geq 847$.
\end{proof}

The algorithm is simple enough, so using a computer all values below $847$ can be checked using the unmodified algorithm. It only fails to win on $1, 3, 7$ and $13$ when checked against all $N < 847$. For $N = 1$ the game ends immediately and the Taxman wins. For $N = 3$ the optimal move is to take the $3$, giving a tie. For all other values we can then win. For $N = 7$ we can take $7, 4, 6$ and for $N = 13$ take $13, 9, 10, 8, 12$. Thus we have proven: \\

\begin{theorem}
    For all $N \notin \{1, 3\}$ the taxman game can be won. \\
\end{theorem}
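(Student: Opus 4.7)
The plan is to combine the asymptotic result of Theorem 4 with a bounded finite check and a pair of explicit witnesses. For $N \geq 847$ the conclusion is already delivered by Theorem 4: the born-free matching with $p_{\max}=5$ scores strictly more than half the pot, and the unrestricted born-free matching of Theorem 3 can only do at least as well (it simply considers every prime $\leq N$ rather than restricting to $\{2,3,5\}$), so it too wins on that range.

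For $4 \leq N < 847$ I would fall back on a direct tabulation: run the unrestricted born-free matching algorithm for each such $N$, compute its score, and compare against half the pot $\tfrac{1}{4}N(N+1)$. Because the algorithm is fully explicit and runs in $\mathcal{O}(N\log N)$, this is a bounded, easily mechanised check. The outcome recorded in the paragraph preceding the theorem is that the algorithm fails to win only for $N \in \{1,3,7,13\}$, so the finite enumeration is really the mechanism delivering the bulk of the small-$N$ cases.

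For the residual values $N = 7$ and $N = 13$ I would verify the two move sequences stated in the text by direct arithmetic. For $N = 7$, playing $7,4,6$ produces taxes $1,2,3$ and leaves $5$ for the taxman, giving $17$ versus $11$ against a pot of $28$; each pick is legal because a proper divisor is in play at the moment it is taken. For $N = 13$, playing $13,9,10,8,12$ produces taxes $1,3,\{2,5\},4,6$ and leaves $\{7,11\}$ for the taxman, giving $52$ versus $39$ against a pot of $91$; again each pick is checked legal in order. Finally, for $N = 1$ there is no legal move at all so the taxman wins trivially, and the introduction already showed $N = 3$ can yield at most a tie, so the exclusion list $\{1,3\}$ in the theorem statement is genuinely tight.

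The only real obstacle is credibility of the bounded check on $4 \leq N \leq 846$, since the theorem as stated relies on the exceptional set being exactly $\{1,3,7,13\}$. In a written proof I would either cite the specific implementation used or include an appendix listing the born-free output on every value of $N$ where the heuristic's score came within, say, a constant of half the pot, so that a reader can independently confirm that no $N$ in the small range slips through.
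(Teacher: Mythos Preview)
Your proposal is correct and follows essentially the same route as the paper: invoke Theorem~4 for $N \geq 847$, mechanically run the born-free matching on the finite range below that to isolate the exceptional set $\{1,3,7,13\}$, and then dispatch $N=7$ and $N=13$ with the explicit winning sequences. Your added arithmetic verification of those two sequences and the remark on documenting the finite check are welcome elaborations, though note that your tabulation range should also cover $N=2$ (which the born-free algorithm wins trivially via the pair $(1,2)$).
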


This leaves only the issue of efficiently constructing the order the moves should be made in given the set from the algorithm above. Luckily this can be done very efficiently. \\

\begin{theorem}
    Given a flat alternating cycle-free matching for the standard taxman game, the order for the moves can be calculated in $\mathcal{O}(N\log(N))$ time, assuming constant time integer operations.
\end{theorem}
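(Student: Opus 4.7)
The plan is to recast the problem as a topological sort on a DAG whose vertices are the matched pairs. Each matched pair $(x_i, y_i)$ with $x_i \lessdot y_i$ corresponds to picking $y_i$ in the game, and that move is legal provided some proper divisor of $y_i$ is still in play; since $x_i$ is itself such a divisor, it suffices to ensure $x_i$ is preserved until $y_i$ is picked. But $x_i$ is never picked (it is only the smaller element of a matched pair), so the only way it can leave play is to be taxed when an earlier pick $y_j$ has $x_i$ as a proper divisor. I therefore construct a directed graph on the matched pairs, with an edge $(x_i, y_i) \to (x_j, y_j)$ whenever $x_i \mid y_j$ properly and $i \neq j$, and I claim that any topological ordering of this graph is a legal move sequence. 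The remaining worry, that some $y_i$ might be taxed before being picked, is automatic: if $y_i \mid y_k$ properly then $x_i \mid y_k$ properly too, so the ordering already forces $y_i$ before $y_k$.

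Acyclicity of this DAG is essentially the content of the second half of the proof of Theorem~1: a directed cycle on pairs would yield a chain $x_{i_1} < y_{i_2}, x_{i_2} < y_{i_3}, \ldots, x_{i_m} < y_{i_1}$, and the argument there extracts a flat alternating cycle in $G$ from such a chain, contradicting the hypothesis on the matching. So a topological order exists, and I intend to find one via Kahn's algorithm.

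For the running time, I would first preprocess the matching in $O(N)$ time into a lookup table recording, for each $n \in \{1, \ldots, N\}$, whether $n$ appears as the smaller or larger element of some pair and the identity of its partner, if so. The edges of the DAG are then enumerated by a sieve: for each $d$, walk through its proper multiples $2d, 3d, \ldots$ up to $N$, and test each (divisor, multiple) pair in constant time against the lookup table to decide whether it contributes an edge. The classical harmonic-sum bound $\sum_{d=1}^{N} \floor{N/d} = O(N \log N)$ controls both the enumeration cost and the total number of DAG edges, and Kahn's algorithm then finishes in $O(V + E) = O(N \log N)$ additional time. The main obstacle is really the opening translation: one must carefully verify that preserving each $x_i$ is the only ordering constraint that matters; once that step is pinned down, the rest is a routine combination of standard graph algorithms with the divisor-sum bound.
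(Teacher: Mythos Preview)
Your approach is correct but proceeds along a somewhat different line from the paper's. The paper first partitions the matched pairs by the rank $\rho(y_i)$ (number of prime factors of $y_i$), processes the ranks in increasing order, and within each rank builds a bipartite graph on the $y_i$'s and $x_i$'s of that level using only \emph{covering} edges (numbers differing by a single prime factor). It then argues directly that a higher-rank vertex of degree~$1$ must exist---essentially reproving the cycle-extraction step of Theorem~1 locally---and peels such vertices off with a queue. The total edge count is bounded by the $O(\log N)$ prime factors per vertex.

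You instead build a single global DAG on the pairs, with an edge whenever $x_i$ properly divides $y_j$ for $j\neq i$, invoke Theorem~1 once for acyclicity, and run Kahn's algorithm; the edge count is controlled by the harmonic divisor bound $\sum_{d\le N}\lfloor N/d\rfloor = O(N\log N)$. Your route is arguably more elementary---no prime factorisation or rank decomposition is needed, only divisor enumeration---and it reuses Theorem~1 cleanly rather than re-deriving its key step. The paper's level-by-level construction, on the other hand, keeps the auxiliary graphs smaller (covering edges only) and makes the connection to flat alternating cycles more visibly local. Both arrive at $O(N\log N)$ by the same essential mechanism: the absence of flat alternating cycles guarantees a source (or degree-$1$) vertex, and the edge count is $O(N\log N)$.
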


\begin{proof}
We start by using the sieve of Eratosthenes to get the smallest prime factor of every number from $1$ to $N$ in $\mathcal{O}(N\log(\log(N)))$ time, storing the results. From this we can calculate the rank, that is to say number of prime factors counted with multiplicity, of every number from $1$ to $N$ in $\mathcal{O}(N)$ time. Thus we can partition our set of matched numbers by rank, creating a list for each rank and populating them in $\mathcal{O}(N)$. We can then consider each rank independently if we consider them in increasing order, since picking an item can only prevent picks of lower rank in the future. For each rank we construct a bipartite graph on the matched numbers of two consecutive ranks. We wish to place an edge between two vertices if they differ only by a single prime factor. Take some number of the higher rank, its smallest prime factor can be found repeatedly and divided out to get all prime factors in $\mathcal{O}(\log(N))$ using the sieve. Thus testing those one at a time we can construct the graph in $\mathcal{O}(\log(N))$ time per vertex, for a total of $\mathcal{O}(N\log(N))$ over all the graphs. \\

Suppose now that all the higher rank vertices in one of these graphs have degree $\geq 2$. Then let us start at some vertex $v$. We can then repeatedly travel to a vertex of lower rank that's not in the matching since the degree is $\geq 2$ and the matched edge only contributes $1$ to the degree. We can then travel back up the matched edge since we only include vertices that are part of our matching. Since our graph is finite this must produce a cycle, a flat alternating cycle. But by our assumption no such cycle exists. Thus there exists a vertex of higher rank with degree exactly $1$. Thus we maintain a queue of such vertices and repeatedly delete the front element of that queue along with its matched vertex from our bipartite
graph. This reduces the degree of all the higher-rank vertices connected to
the deleted lower-rank vertex by 1. When the degree of a vertex reaches
1, it is pushed onto the queue. Using appropriate data structures this can be done in $\mathcal{O}(\log(N))$ time per vertex, giving a total of $\mathcal{O}(N\log(N))$. Our order for this one rank is then simply the order in which we deleted the matches from the graph. This thus produces an order in $\mathcal{O}(N\log(N))$ time.
\end{proof}

\section*{Lower and upper bound}

Lastly we present two $\mathcal{O}(N^2\log(N))$ algorithms that give a lower and upper bound respectively for the optimal score. While the born-free matching algorithm does always manage to win, it doesn't perform as well as many known heuristic algorithms in practice. For larger $N$ it usually manages to obtain about $56.89\%$ of the pot, see figure \ref{primeperf}. \\

The optimal score has been shown to be the maximum weight flat alternating cycle-free  matching on a particular graph. Thus we now get an upper bound for free, since this score can't ever exceed the unrestricted maximum weight matching on the same graph. Using the algorithm in \cite{fastmatch} this can be done in $\mathcal{O}(V(E + V\log(V)))$ time where $V$ is the number of vertices and $E$ is the number of edges, so this gives us an upper bound in $\mathcal{O}(N^2\log(N))$. This algorithm is very hard to implement so the implementation used for this paper is based on Edmond's algorithm with the slower time complexity of $\mathcal{O}(N^3)$ \cite{maxmatch}. This bound is very tight for the values of $N$ where the optimal score is known, see figure \ref{boundperf}. The values for the optimal score are taken from \cite{chess}. \\

Lastly there is the lower bound. This is achieved through a heuristic algorithm that starts with the maximum weight matching and tries to remove as little weight as possible to break all flat alternating cycles in the matching. Suppose we now orient each edge such that it goes from the lower rank to the upper, reversing the orientation for edges within our matching. This makes flat alternating cycles correspond to directed cycles in this new graph. Thus our problem of breaking all directed cycles is now a well known problem, the minimum feedback arc set problem. Using the heuristic algorithm in \cite{fas} this can be done in $\mathcal{O}(VE)$ time, meaning the total time complexity is still $\mathcal{O}(N^2\log(N))$. A comparison of the output of this algorithm to optimal scores can be seen in figure \ref{boundperf}. \\

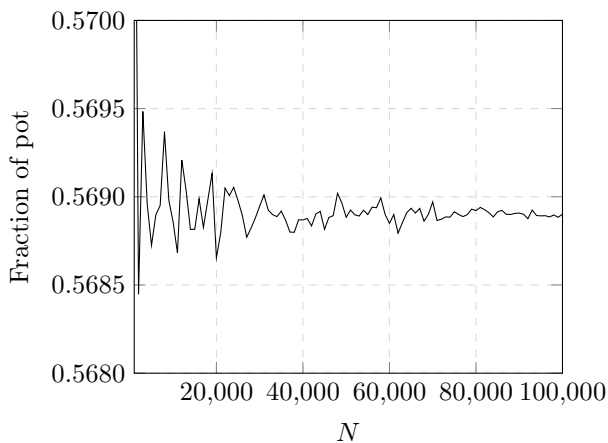
\begin{figure}[h!]
  \begin{center}
    \begin{tikzpicture}
      \begin{axis}[
          width=\textwidth*0.6,
          grid=major,
          grid style={dashed,gray!30}, 
          xlabel=$N$, 
          ylabel=Fraction of pot,
          xmin=1000, xmax=100000,
          ymin=0.568, ymax=0.57,   
          y tick label style={
          	/pgf/number format/.cd,
			fixed zerofill,
			precision=4,
			/tikz/.cd
		  },
		  x tick label style = {
		  	/pgf/number format/fixed,
		  	scaled x ticks = false,
		  	/tikz/.cd
		  },
		  ylabel near ticks
        ]
        \addplot[mark=none]
        table[x=N,y=p(N),col sep=comma] {tax_prime.csv}; 
      \end{axis}
    \end{tikzpicture}
    \caption{Performance of born-free matching algorithm.}
    \label{primeperf}
  \end{center}
\end{figure}

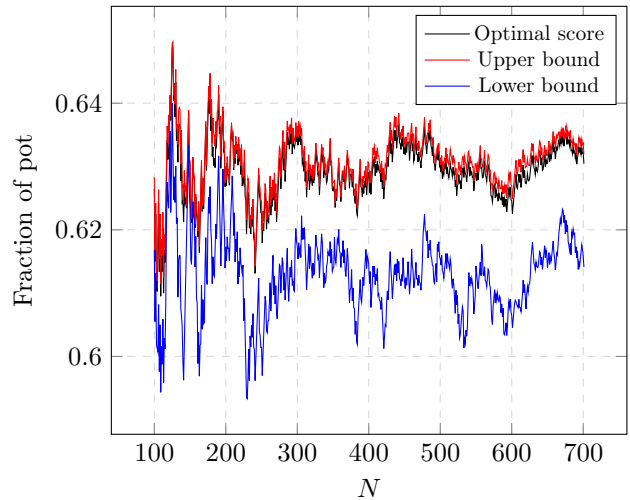
\begin{figure}[h!]
  \begin{center}
    \begin{tikzpicture}
      \begin{axis}[
          legend style={nodes={scale=0.8, transform shape}}, 
          grid=major, 
          grid style={dashed,gray!30},
          xlabel=$N$, 
          ylabel=Fraction of pot,          
        ]
        \addplot[mark=none, black]
        table[x=N,y=opt(N),col sep=comma] {tax_upper.csv}; 
        \addlegendentry {Optimal score};
        \addplot[mark=none, red]
        table[x=N,y=upper(N),col sep=comma] {tax_upper.csv}; 
        \addlegendentry {Upper bound};
        \addplot[mark=none, blue]
        table[x=N,y=lower(N),col sep=comma] {tax_lower.csv}; 
        \addlegendentry {Lower bound};
      \end{axis}
    \end{tikzpicture}
    \caption{Quality of matching upper and lower bounds.}
    \label{boundperf}
  \end{center}
\end{figure}

\end{document}